\documentclass[11pt]{amsart}
\usepackage{graphicx}
\usepackage{fullpage}
\usepackage{tikz, xparse, float, xcolor}
\usetikzlibrary{arrows.meta}

\newtheorem{theorem}{Theorem}[section]

\newtheorem{lemma}[theorem]{Lemma}

\newtheorem*{BijThm}{Theorem~\ref{thm:bijections}}

\theoremstyle{definition}
\newtheorem{definition}[theorem]{Definition}

\newcommand{\boundcond}{
\draw[-{Stealth[scale=1.2]}] (2,0.5)--++(-0.35,0); \draw (2,0.5)--++(-0.5,0);
\draw[-{Stealth[scale=1.2]}] (2,1)--++(-0.35,0); \draw (2,1)--++(-0.5,0);
\draw[-{Stealth[scale=1.2]}] (2,1.5)--++(-0.35,0); \draw (2,1.5)--++(-0.5,0);
\draw[-{Stealth[scale=1.2]}] (0,1.5)--++(0.35,0); \draw (0,1.5)--++(0.5,0);
\draw[-{Stealth[scale=1.2]}] (0,1)--++(0.35,0); \draw (0,1)--++(0.5,0);
\draw[-{Stealth[scale=1.2]}] (0,0.5)--++(0.35,0); \draw (0,0.5)--++(0.5,0);
}
\newcommand{\iceI}[2]{
\draw[-{Stealth[scale=1.2]}] (#1,#2)--++(0.35,0); \draw (#1,#2)--++(0.5,0);
\draw[-{Stealth[scale=1.2]}] (#1,#2)--++(0,0.35); \draw (#1,#2)--++(0,0.5);
}
\newcommand{\iceII}[2]{
\draw[-{Stealth[scale=1.2]}] (#1,#2)--++(-0.35,0); \draw (#1,#2)--++(-0.5,0);
\draw[-{Stealth[scale=1.2]}] (#1,#2)--++(0,0.35); \draw (#1,#2)--++(0,0.5);
}
\newcommand{\iceIII}[2]{
\draw[-{Stealth[scale=1.2]}] (#1,#2)--++(-0.35,0); \draw (#1,#2)--++(-0.5,0);
\draw[-{Stealth[scale=1.2]}] (#1,#2)--++(0,-0.35); \draw (#1,#2)--++(0,-0.5);
}
\newcommand{\iceIV}[2]{
\draw[-{Stealth[scale=1.2]}] (#1,#2)--++(0.35,0); \draw (#1,#2)--++(0.5,0);
\draw[-{Stealth[scale=1.2]}] (#1,#2)--++(0,-0.35); \draw (#1,#2)--++(0,-0.5);
}
\newcommand{\iceup}[2]{
\draw[-{Stealth[scale=1.2]}] (#1,#2)--++(0,0.35); \draw (#1,#2)--++(0,0.5);
\draw[-{Stealth[scale=1.2]}] (#1,#2)--++(0,-0.35); \draw (#1,#2)--++(0,-0.5);
}
\newcommand{\iceside}[2]{
\draw[-{Stealth[scale=1.2]}] (#1,#2)--++(0.35,0); \draw (#1,#2)--++(0.5,0);
\draw[-{Stealth[scale=1.2]}] (#1,#2)--++(-0.35,0); \draw (#1,#2)--++(-0.5,0);
}
\newcommand{\magogthree}[2]{
\draw[-{Stealth[scale=1.2]}] (#1,#2)--++(0,0.35); \draw (#1,#2)--++(0,0.5);
\draw[-{Stealth[scale=1.2]}] (#1,#2)--++(0,-0.35); \draw (#1,#2)--++(0,-0.5);
\draw[-{Stealth[scale=1.2]}] (#1,#2)--++(-0.35,0); \draw (#1,#2)--++(-0.5,0);
}
\newcommand{\magogone}[2]{
\draw[-{Stealth[scale=1.2]}] (#1,#2)--++(-0.35,0); \draw (#1,#2)--++(-0.5,0);
}

\title{The Many Faces of Magog Matrices}
\author{Rohan Bansal}
\address{Liberty High School, Frisco, TX.} 
\email{rohanbansal07@gmail.com}

\author{Jessica Striker$^*$}
\address{Department of Mathematics, North Dakota State University, Fargo, ND.} 
\email{jessica.striker@ndsu.edu}
\thanks{$^*$Striker's research is partially supported by NSF Grant DMS-2247089 and Simons Foundation Gift MP-TSM-00002802.}

\begin{document}
\begin{abstract}
Magog matrices, introduced by Holmlund and Striker in 2025, provide a matrix model for totally symmetric self-complementary plane partitions (TSSCPPs), as a natural analogue of alternating sign matrices (ASMs). In this paper, we develop several new combinatorial representations of magog matrices, mirroring classical representations of ASMs. Specifically, we define magog analogues of corner-sum matrices, height-function matrices, fully packed loop configurations, and vertex models, and establish explicit bijections among all of these objects. These constructions provide new structural insight into the combinatorics of TSSCPPs and illuminate  parallels and differences between the ASM and TSSCPP frameworks.
\end{abstract}

\maketitle

\section{Introduction}\label{sec:intro}

Alternating sign matrices (ASMs) are $n \times n$ matrices with entries in $\{0, 1, -1\}$ 
in which the nonzero entries in each row and column alternate in sign, beginning and ending 
with $+1$, and in which each row and column sums to $1$. First introduced by Mills, Robbins, 
and Rumsey \cite{MRR83} in the context of Dodgson condensation, ASMs quickly revealed 
themselves to be rich combinatorial objects admitting a striking variety of equivalent 
representations. The number of $n \times n$ ASMs is given by the product formula
\[
\prod_{j=0}^{n-1} \frac{(3j+1)!}{(n+j)!},
\]
conjectured in \cite{MRR83} and first proved by Zeilberger \cite{Z96a}, then proved 
by Kuperberg \cite{K96} using the six-vertex model from statistical mechanics.

A long-standing open problem in algebraic combinatorics is to find an explicit bijection between 
$n \times n$ ASMs and totally symmetric self-complementary plane partitions (TSSCPPs) in a 
$2n \times 2n \times 2n$ box. These two sets of objects are known to be equinumerous. Andrews 
proved the TSSCPP enumeration formula \cite{A94} in 1991, and Zeilberger's proof of the ASM 
conjecture the following year \cite{Z96a} established the equinumerosity, yet no satisfying explicit 
bijection between them is known. Several partial bijections have been constructed 
\cite{CB11, Bet15, HS24}, and the problem remains a driving motivation in the study 
of ASMs and plane partitions.

In \cite{HS25}, Holmlund and Striker introduced a new class of matrices, called 
\emph{magog matrices}, which provide a matrix representation of TSSCPPs. An $n \times n$ magog matrix is a square $\{0, 1, -1\}$-matrix satisfying a subset of the ASM row 
and column sum conditions, together with an additional set of inequalities on 
partial sums called the \emph{special inequalities}; see Definition~\ref{def:magog} for specifics. The set of $n \times n$ magog matrices 
is in explicit bijection with TSSCPPs in a $2n \times 2n \times 2n$ box. Thus, magog 
matrices sit naturally alongside ASMs as a TSSCPP-side representation of the 
equinumerosity mystery.

The title of this paper is inspired by Propp's survey \cite{P01}, which catalogues many 
equivalent combinatorial representations of ASMs, including corner-sum matrices, 
height-function matrices, three-colorings, monotone triangles, order ideals of a certain 
poset, square ice states (six-vertex models), gasket-and-basket tilings, and fully packed 
loop (FPL) configurations. Each of these representations illuminates a different facet of 
the combinatorics of ASMs.  Zeilberger defined \emph{magog triangles}~\cite{Z96a} as a TSSCPP analogue of monotone triangles. TSSCPPs have also been shown to have an order ideal interpretation~\cite{tetra,RS} and a representation as a certain type of Boolean triangle~\cite{S18}. Our goal in this paper is to develop  representations of TSSCPPs analogous to other ASM objects discussed in \cite{P01}.

The main result of this paper is the following, proved in Section~\ref{sec:m_bij}.

\begin{BijThm}
There are explicit bijections among magog matrices, magog corner-sum 
matrices, magog height-function matrices, magog fully-packed loop 
configurations, and magog vertex models all of order $n$.
\end{BijThm}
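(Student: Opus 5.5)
We prove the theorem by a chain of bijections, as in Propp's treatment of ASMs. The chain is
\[
\text{magog matrices} \leftrightarrow \text{corner-sum} \leftrightarrow \text{height-function} \leftrightarrow \text{vertex models} \leftrightarrow \text{FPLs}.
\]
At each link the forward map is an explicit local or partial-sum transformation. For each link we check three things: that the image satisfies the defining conditions of the target object, that the inverse map (a finite difference or a local reading) is well defined, and that the two maps compose to the identity. Since the maps are explicit and mutually inverse, bijectivity follows link by link.

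\textbf{Step 1 (magog matrices and corner-sum matrices).} Given a magog matrix $M$ of order $n$ (Definition~\ref{def:magog}), set $c_{ij}=\sum_{i'\le i,\,j'\le j} M_{i'j'}$. The inverse is the second mixed difference
\[
M_{ij}=c_{ij}-c_{i-1,j}-c_{i,j-1}+c_{i-1,j-1},
\]
so the map is clearly injective. The real work is to translate each magog condition into a condition on $c$:
\begin{itemize}
\item entries in $\{0,\pm1\}$ give a bound on the mixed difference;
\item the partial-row and partial-column sum conditions (those ASM conditions that magog matrices retain) give the increment conditions $c_{ij}-c_{i,j-1}\in\{0,1\}$, or their column analogues, along the relevant rows and columns, together with the prescribed boundary values;
\item the special inequalities give linear inequalities among the entries $c_{ij}$.
\end{itemize}
We define magog corner-sum matrices to be exactly the matrices satisfying this translated list, so Step 1 amounts to checking the equivalence condition by condition.

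\textbf{Step 2 (corner-sum and height-function matrices).} Use the affine change $h_{ij}=i+j-2c_{ij}$, indexed on $0\le i,j\le n$ so that it is consistent with the boundary data. This transformation is invertible, and it carries the increment conditions to the statement that adjacent heights differ by $\pm1$ wherever the magog row and column conditions hold. The special inequalities carry over as transformed linear inequalities.

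\textbf{Step 3 (height functions and vertex models).} Orient each edge of the grid by comparing the heights on its two sides, using the usual convention: the arrow points so that the larger height lies on its left. Where adjacent heights differ by $\pm1$, we get the standard six ice vertices. Where the magog conditions are relaxed, the height difference may be $0$ or unconstrained, and there we get the extra vertex types (the paper's \verb|\magogthree|/\verb|\magogone| pictures, with missing edges and three-edged vertices). The boundary arrows are determined by the boundary heights.

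\textbf{Step 4 (vertex models and FPLs).} Use the standard checkerboard recoloring. On even-parity vertices, keep the edges pointing in a fixed pair of directions; on odd-parity vertices, keep the complementary pair. The degree conditions on the resulting subgraph hold vertex-type by vertex-type.

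\textbf{Main obstacle.} The main difficulty is Steps 3--4 at the non-ASM locations. There we must show two things. First, the allowed local configurations are exactly the images of height patterns that satisfy the special inequalities. Second, the special inequalities, which are nonlocal partial-sum conditions, can be read off from the vertex or loop configuration. The first approach to try is to encode each special inequality as a condition along a path, for example a prescribed count of certain vertex types along a diagonal. If this cannot be done locally, we instead define the magog vertex models and FPLs so that they include this global condition explicitly; the bijection then still holds, because the local maps are injective and the global condition is carried through each step unchanged.
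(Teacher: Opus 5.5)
There is a genuine gap, in Steps 3–4. Your Steps 1 and 2 are the paper's: partial sums, then $h_{i,j}=i+j-2c_{i,j}$. You should still pin the translation down:
\begin{itemize}
\item $c_{i,j}-c_{i,j-1}\in\{0,1\}$ and $c_{i,j}\ge c_{i-1,j}$ hold everywhere;
\item the column increments $c_{i,j}-c_{i-1,j}=r_{i,j}$ can exceed $1$, so vertical height steps are $1-2r_{i,j}\in\{1,-1,-3,\dots\}$;
\item the special inequality becomes $c_{i+1,j+1}+c_{i,j-1}\ge 2c_{i,j}$.
\end{itemize}
Orienting edges directly by comparing heights is a legitimate alternative to the paper's route, which goes height function $\to$ FPL by a parity rule and then orients FPL edges from even to odd vertices. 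With the larger height on the \emph{right} of each arrow, your rule reproduces the paper's domain-wall orientation and its left/right-facing vertices.

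The gap is that you never determine the image, so you never reach the objects named in the theorem. Your fallback of defining magog vertex models and FPLs ``to include the global condition explicitly'' is circular: it only says that an injective map is a bijection onto its image. The local picture you sketch is also wrong. Since $h_{i,j}\equiv i+j \pmod 2$, adjacent heights never differ by $0$, every edge receives an orientation, and no edges are missing; the paper's figures simply draw each vertex's outgoing arrows. In Step 4 the rule must be ``keep exactly the edges directed from an even vertex to an odd vertex''. A fixed pair of compass directions would not produce the degree-$1$ and degree-$3$ vertices.

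Here is what the missing analysis gives. Write $s_{i,j}=\sum_{i'\le i}a_{i',j}\in\{0,1\}$ and $r_{i,j}=\sum_{j'\le j}a_{i,j'}\ge 0$. At the vertex carrying $a_{i,j}$, the two vertical edges record $s_{i-1,j}$ and $s_{i,j}$, hence $a_{i,j}$. The two horizontal edges record only whether $r_{i,j-1}$ and $r_{i,j}$ are zero. A case check shows the only non-ice vertices are $a_{i,j}=1$ with $r_{i,j-1}\ge 1$ and $a_{i,j}=-1$ with $r_{i,j-1}\ge 2$; these are the paper's one- and three-incoming-arrow structures.

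The special inequality, which you call nonlocal, is in fact local. It fails exactly when $s_{i,j}=1$, $r_{i+1,j}=0$ and $s_{i+1,j+1}=0$. That is a fixed orientation of three edges at the adjacent vertices $(i+1,j)$ and $(i+1,j+1)$, which is the paper's special forbidden structure.

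The genuinely nonlocal condition is row nonnegativity, because horizontal arrows forget the size of $r_{i,j}$. For any configuration obeying the local vertex rules one checks that $r_{i,j}=\epsilon_{i,j}+(P-Q)_{i,j}$. Here $\epsilon_{i,j}=1$ if the horizontal edge to the right of $(i,j)$ points west and $0$ otherwise, and $(P-Q)_{i,j}$ is the number of one-in minus three-in vertices in row $i$ up to column $j$. So the image is cut out by $(P-Q)_{i,j}\ge 0$ \emph{with equality whenever $\epsilon_{i,j}=0$}. Proving this is the real content of Steps 3--4.

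Nonnegativity alone, which is all the paper's fourth condition literally requires, is not enough. Take the $5\times 5$ magog matrix with rows $(0,0,0,0,1)$, $(0,0,1,0,0)$, $(1,1,-1,1,-1)$, $(0,0,1,0,0)$, $(0,0,0,0,1)$. Reverse the arrow between the three-in vertex $(3,3)$ and the one-in vertex $(3,4)$. This gives two ordinary ice vertices, keeps $P-Q\ge 0$ and creates no forbidden structure, yet the result is not in the image.
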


This paper is organized as follows. In Section~\ref{sec:prelim}, we review the necessary 
background on magog matrices and TSSCPPs and recall relevant results 
from \cite{HS25}. In Section~\ref{subsec:ASM_faces}, we review the suite of ASM representations from \cite{P01} that 
will serve as templates for our constructions. In Section~\ref{sec:m_bij}, we define each 
of the new magog analogues and prove Theorem~\ref{thm:bijections}.

\section{TSSCPPs and magog matrices}\label{sec:prelim}

In this section we review the definitions of TSSCPPs and magog matrices. We also mention relevant enumerative results on magog matrices from \cite{HS25}.

\begin{definition}[\cite{Mac01,S86}]
A \emph{plane partition} $\pi$ is a set of lattice points with positive integer coordinates
$(i,j,k)$ such that if $(i,j,k) \in \pi$ and $1 \leq i' \leq i$, $1 \leq j' \leq j$,
$1 \leq k' \leq k$, then $(i',j',k') \in \pi$. Equivalently, it is a weakly decreasing
two-dimensional array of nonnegative integers. A plane partition fits \emph{inside a box}
$a \times b \times c$ if all its nonzero entries satisfy $i \leq a$, $j \leq b$,
$k \leq c$.
\end{definition}

\begin{definition}[\cite{S86}]
A plane partition $\pi$ is \emph{totally symmetric} if whenever $(i,j,k) \in \pi$, all
six permutations of $(i,j,k)$ are also in $\pi$. It is \emph{self-complementary} inside
an $a \times b \times c$ box if the set of lattice points $(i,j,k)$ with $1 \leq i
\leq a$, $1 \leq j \leq b$, $1 \leq k \leq c$ and $(i,j,k) \notin \pi$ is a translated copy
of $\pi$ (that is, the ``empty space'' has the same shape as the ``filled space''). A
\emph{totally symmetric self-complementary plane partition (TSSCPP)} inside a $2n \times
2n \times 2n$ box is a plane partition that is both totally symmetric and
self-complementary inside the box.
\end{definition}

We now define square sign matrices, which are a superset of both magog matrices and alternating sign matrices.

\begin{definition}[{\cite[Definition~2.2]{HS25}}]\label{def:ssm}
An $n \times n$ \emph{square sign matrix} is a matrix $A = (a_{i,j})$ with entries in
$\{0, 1, -1\}$ satisfying:
\begin{itemize}
    \item $\displaystyle\sum_{j=1}^n a_{i,j} = 1$ for all $1 \leq i \leq n$ (rows sum
    to $1$),
    \item $\displaystyle\sum_{i=1}^n a_{i,j} = 1$ for all $1 \leq j \leq n$ (columns
    sum to $1$),
    \item $\displaystyle\sum_{i'=1}^{i} a_{i',j} \in \{0,1\}$ for all $i,j$ (partial
    column sums are $0$ or $1$), and 
    \item $\displaystyle\sum_{j'=1}^{j} a_{i,j'} \geq 0$ for all $i,j$ (partial row sums
    are nonnegative).
\end{itemize}
We denote the set of $n \times n$ square sign matrices by $\mathrm{Sign}_n$.
\end{definition}

Alternating sign matrices are the square sign matrices additionally satisfying
$\sum_{j'=1}^{j} a_{i,j'} \leq 1$; that is, the partial row sums lie
in $\{0,1\}$. Magog matrices are a different subclass, characterized by a distinct additional
inequality.

\begin{definition}[{\cite[Definition~3.1]{HS25}}]\label{def:magog}
An \emph{magog matrix} of order $n$ is a square sign matrix $A = (a_{i,j})$ such that
\[
\sum_{j'=1}^{j} a_{i+1,j'} + \sum_{i'=1}^{i+1} a_{i',j+1} - \sum_{i'=1}^{i} a_{i',j}
\geq 0
\]
for all $1 \leq i \leq n-2$, $1 \leq j \leq n-2$. We call this the
\emph{$(i,j)$-special inequality} and write $\mathrm{Magog}_n$ for the set of all $n
\times n$ magog matrices.
\end{definition}

The left-hand side of the special inequality consists of the partial row
sum of row $i+1$ through column $j$, plus the partial column sum of column $j+1$ through
row $i+1$, minus the partial column sum of column $j$ through row $i$. The special inequality is violated
precisely when the partial column sum of column $j$ up to row $i$ equals $1$, yet both the
partial row sum of row $i+1$ up to column $j$ and the partial column sum of column $j+1$
through row $i+1$ are zero.

Note that $\mathrm{ASM}_n$ and $\mathrm{Magog}_n$ are distinct subsets of
$\mathrm{Sign}_n$. For instance, every permutation matrix lies in $\mathrm{ASM}_n$, while only the
$132$-avoiding permutation matrices lie in $\mathrm{Magog}_n$ \cite[Theorem~3.7]{HS25}.
The last matrix in Figure~1 of \cite{HS25} shows that neither set contains the other.

Holmlund and Striker established a number of refined enumerations of $\mathrm{Magog}_n$. The maximum
number of $-1$ entries in any $n \times n$ magog matrix is $\lfloor \frac{n-1}{2}
\rfloor\lceil \frac{n-1}{2}\rceil$, which is the same as for ASMs. Refined enumerations by the
position of the $1$ in the first or last row or column, and by inversion number were also given in \cite{HS25}.

\section{Alternating sign matrix bijections}\label{subsec:ASM_faces}

We now recall the suite of equivalent representations of ASMs surveyed by Propp
\cite{P01} and extended to the partial ASM setting by Heuer \cite{H24}. These will serve
as the templates for our magog analogues in Section~\ref{sec:m_bij}. We follow the
graph-theoretic setup of \cite{H24} to give uniform definitions that will adapt cleanly
to the magog setting.

\begin{definition}
\label{def:csm}
Given an ASM $(a_{i,j})_{1 \leq i,j \leq n}$, the associated \emph{corner-sum matrix}
is the $(n+1) \times (n+1)$ matrix $(c_{i,j})_{0 \leq i,j \leq n}$ defined by
$c_{i,j} = \sum_{i' \leq i,\, j' \leq j} a_{i',j'}$. Intrinsically, corner-sum matrices
are characterized as matrices $(c_{i,j})_{0 \leq i,j \leq n}$ satisfying:
\begin{itemize}
    \item $c_{0,k} = c_{k,0} = 0$ and $c_{n,k} = c_{k,n} = k$ for all $0 \leq k \leq n$,
    \item $c_{i,j} - c_{i,j-1} \in \{0,1\}$ (rows increase by at most $1$), and
    \item $c_{i,j} \geq c_{i-1,j}$ (columns are weakly increasing).
\end{itemize}
\end{definition}

\begin{definition}
\label{def:hfm}
The \emph{height-function matrix} of an ASM is the $(n+1) \times (n+1)$ matrix
$(h_{i,j})_{0 \leq i,j \leq n}$ defined from the corner-sum matrix by $h_{i,j} =
i + j - 2c_{i,j}$. Intrinsically, height-function matrices are characterized by:
\begin{itemize}
    \item the first row and column are $0, 1, 2, \ldots, n$ (consecutively),
    \item the last row and column are $n, n-1, \ldots, 0$ (consecutively), and
    \item any two row-adjacent or column-adjacent entries differ by exactly $1$.
\end{itemize}
\end{definition}

The height-function matrix can be viewed as a proper $3$-coloring of the $(n+1) \times
(n+1)$ grid by reducing entries modulo $3$ \cite{P01}.

We now define the graph underlying the fully-packed loop and six-vertex model representations,
following the setup of \cite{H24}.

\begin{definition}
\label{def:Gmn}
Define the graph $G_{m,n}$ as follows. The vertex set is
\[
V_{m,n} = \{v_{i,j} : 0 \leq i \leq m+1,\, 0 \leq j \leq n+1\}
\setminus \{v_{0,0},\, v_{0,n+1},\, v_{m+1,0},\, v_{m+1,n+1}\}.
\]
The \emph{interior vertices} are $\{v_{i,j} : 1 \leq i \leq m,\, 1 \leq j \leq n\}$;
the remaining vertices are \emph{boundary vertices}. A vertex $v_{i,j}$ is \emph{even}
(resp.\ \emph{odd}) if $i+j$ is even (resp.\ odd). The edge set is
\[
E_{m,n} = \{v_{i,j}v_{i+1,j} : 0 \leq i \leq m,\, 1 \leq j \leq n\}
\cup \{v_{i,j}v_{i,j+1} : 1 \leq i \leq m,\, 0 \leq j \leq n\}.
\]
\end{definition}

In the square case $m = n$, the graph $G_{n,n}$ is the underlying graph for both the FPL
and six-vertex model representations of ASMs.

\begin{definition}
\label{def:FPL}
A \emph{fully-packed loop (FPL) configuration} of order $n$ is a subgraph of $G_{n,n}$
in which every interior vertex has exactly two incident edges, and the boundary edges are
selected according to the following \emph{alternating boundary conditions}: when $n$ is
odd, edges $v_{0,j}v_{1,j}$ (for $j$ odd) and $v_{n,j}v_{n+1,j}$ (for $j$ odd) and
$v_{i,0}v_{i,1}$ (for $i$ even) and $v_{i,n}v_{i,n+1}$ (for $i$ even) are included;
when $n$ is even, edges $v_{0,j}v_{1,j}$ (for $j$ odd), $v_{n,j}v_{n+1,j}$ (for $j$
even), $v_{i,0}v_{i,1}$ (for $i$ even), and $v_{i,n}v_{i,n+1}$ (for $i$ odd) are
included.
\end{definition}

\begin{definition}
\label{def:sqice}
A \emph{square ice configuration} (or \emph{six-vertex model state}) of order $n$ is an
orientation of the edges of $G_{n,n}$ such that each interior vertex has exactly two
incoming and two outgoing edges. We impose \emph{domain-wall boundary conditions}: all
arrows along the left and right boundaries point inward, and all arrows along the top and
bottom boundaries point outward. 
\end{definition}

FPL configurations are in bijection with square ice configurations (hence with ASMs)
via the rule that retains exactly those edges oriented from an even vertex to an odd
vertex, discards the rest, then forget the orientations \cite[Lemma~23]{H24}. The boundary conditions
match up correctly under this correspondence.

The seven FPL and square ice configurations for $n = 3$ are shown in Figures 9 and 14
of \cite{P01}. In Section~\ref{sec:m_bij}, we define magog analogues of corner-sum
matrices, height-function matrices, fully packed loops, and vertex models, and prove
explicit bijections among all five representations of magog matrices.

\section{Magog Matrix Bijections}\label{sec:m_bij}

In the previous section, we defined \textit{magog matrices}. In this section, we construct four new combinatorial objects using analogues of the bijections found in the  alternating sign matrix setting. Our main theorem is the following. 

\begin{theorem}\label{thm:bijections}
There are explicit bijections among the following: 
\begin{itemize}
    \item $n\times n$ magog matrices, 
    \item magog corner-sum matrices of order $n$, 
    \item magog height-function matrices of order $n$, 
    \item magog fully-packed loop configurations of order $n$, and
    \item magog vertex models of order $n$.
\end{itemize}
\end{theorem}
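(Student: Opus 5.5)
The plan is to prove Theorem~\ref{thm:bijections} by building a chain of maps
\[
\mathrm{Magog}_n \to \text{corner-sum} \to \text{height-function} \to \text{vertex model} \to \text{FPL},
\]
each of which is the restriction of the corresponding ASM-style map from Section~\ref{subsec:ASM_faces} to a larger class. The magog analogues are defined so that each map is a bijection essentially by construction. The approach is to work first at the level of square sign matrices. There, the corner-sum, height-function, and vertex-model pictures are defined by relaxed local rules. The special inequality of Definition~\ref{def:magog} is then transported along each map and becomes the extra defining condition of the magog object.

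\textbf{Step 1: corner sums.} Define $c_{i,j}=\sum_{i'\le i,\,j'\le j}a_{i',j'}$ exactly as in Definition~\ref{def:csm}. Inclusion--exclusion, $a_{i,j}=c_{i,j}-c_{i-1,j}-c_{i,j-1}+c_{i-1,j-1}$, inverts this. The square sign matrix conditions of Definition~\ref{def:ssm} translate as follows.
\begin{itemize}
\item Partial column sums lie in $\{0,1\}$: this becomes $c_{i,j}-c_{i,j-1}-(c_{i-1,j}-c_{i-1,j-1})\in\{0,1\}$ for the vertical-difference version. It is cleaner to say that $c_{i,j}-c_{i,j-1}\in\{0,1\}$, since this difference is the partial column sum of column $j$ through row $i$.
\item Nonnegative partial row sums become $c_{i,j}\ge c_{i-1,j}$.
\item The boundary values are as in Definition~\ref{def:csm}.
\end{itemize}
Each of the three terms in the special inequality is a difference of corner sums: $(c_{i+1,j}-c_{i,j})+(c_{i+1,j+1}-c_{i+1,j})-(c_{i,j}-c_{i,j-1})\ge 0$. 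This simplifies to
\[
c_{i+1,j+1}+c_{i,j-1}\ge 2c_{i,j}.
\]
A magog corner-sum matrix is therefore a matrix $(c_{i,j})$ satisfying the three conditions of Definition~\ref{def:csm} together with this inequality for $1\le i,j\le n-2$. The bijection is immediate because both directions preserve all the conditions term by term.

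\textbf{Step 2: height functions.} Set $h_{i,j}=i+j-2c_{i,j}$.
\begin{itemize}
\item Row steps $c_{i,j}-c_{i,j-1}\in\{0,1\}$ become $h_{i,j}-h_{i,j-1}=\pm1$.
\item Column monotonicity becomes $h_{i,j}-h_{i-1,j}\le 1$, with odd differences. This gives differences in $\{1,-1,-3,\dots\}$, so vertical steps are no longer restricted to $\pm1$. This is the genuine departure from the ASM case, and the intrinsic characterization must record it.
\item The boundary rows and columns are as before.
\item The special inequality becomes $h_{i+1,j+1}+h_{i,j-1}\le 2h_{i,j}+2$, using the identity $(i+1)+(j+1)+i+(j-1)=2(i+j)+2$.
\end{itemize}
Since $h$ and $c$ determine each other linearly, this is again a bijection. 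The main bookkeeping burden is stating the intrinsic conditions correctly.

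\textbf{Step 3: vertex models and FPLs.} This is the main obstacle. In an ASM, every interior vertex has in-degree $2$. For square sign matrices, row partial sums can exceed $1$, so a horizontal edge must carry more information than a single arrow.

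My first attempt is to orient only the vertical edges in the standard way, since column partial sums are still $0/1$. Each horizontal edge would carry the multiplicity $r=\sum_{j'\le j}a_{i,j'}\ge 0$, drawn as $r$ leftward arrows or as an edge label. Equivalently, and closer to the macros defined in the preamble, horizontal edges with $r\ge 1$ would be drawn as one arrow together with extra "magog" vertex types, such as a three-arm vertex. One then lists the finitely many allowed local vertex types. The entry $a_{i,j}$ is read off as the change in $r$ across the vertex, which must equal the negative of the change in the vertical arrow state. The special inequality is a condition on the $2\times2$ block of faces around $(i,j)$, and it is imposed as a forbidden local pattern: a column-$j$ arrow indicating partial sum $1$ at row $i$, horizontal state $0$ at $(i+1,j)$, and vertical state $0$ at $(i+1,j+1)$, which is exactly the violation described after Definition~\ref{def:magog}.

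For the FPLs, I would apply the rule of \cite[Lemma~23]{H24}: keep the edges oriented from even to odd vertices. For the new vertex types, I would specify an edge-selection rule by hand, with interior degree allowed to be $2$ or the appropriate new value, and check case by case that the correspondence with vertex types is bijective and that the boundary conditions match.

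Composing Steps 1--3 gives the theorem. The case check in Step 3, together with choosing the encoding of multiplicities so that the rule stays local, is where I expect most of the work to lie.
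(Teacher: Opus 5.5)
\paragraph{Steps 1 and 2.} These are exactly the paper's Lemmas~\ref{lem:cs} and~\ref{lem:hf}: the same inclusion--exclusion, the same rewriting of the special inequality as $c_{i+1,j+1}+c_{i,j-1}\ge 2c_{i,j}$, and $h_{i,j}=i+j-2c_{i,j}$ with vertical steps in $\{1,-1,-3,\dots\}$. There is one arithmetic slip. Since $(i+1)+(j+1)+i+(j-1)=2(i+j)+1$, the transported inequality is $h_{i+1,j+1}+h_{i,j-1}\le 2h_{i,j}+1$, as in the paper. Your $+2$ version is equivalent only because $h_{i,j}\equiv i+j \pmod 2$ makes the left side odd, and you should say so.

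\paragraph{The gap: Step 3.} This step is a plan rather than a proof. You never define a magog vertex model or a magog FPL, so nothing is established for two of the five families.

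The two encodings you call equivalent are not equivalent. Multiplicity labels are not an orientation of $G_{n,n}$, so they give no domain-wall vertex model, and the even-to-odd rule of \cite{H24} has nothing to act on. Single arrows forget $r$ locally, so ``$a_{i,j}$ is the change in $r$'' fails. Instead $a_{i,j}$ must be read from the vertical arrows alone, which suffices because partial column sums are $0/1$.

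Suppose each horizontal arrow records $s=\min(r,1)$. Then the only new vertices are an in-degree-$1$ vertex (a $+1$ with $r\ge 1$ on its left) and an in-degree-$3$ vertex (a $-1$ with $r\ge 2$ on its left). Along a row, $r=s+e$, where $e$ is the number of in-degree-$1$ vertices minus the number of in-degree-$3$ vertices so far. Row nonnegativity is therefore a nonlocal condition, and no local case check can replace it.

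You need $e\ge 0$, and you also need $e=0$ wherever $s=0$. Without the second condition, a $-1$ met with $s=1$ can be drawn either as an ordinary six-vertex $-1$ or as an in-degree-$3$ vertex. For example, take the $5\times5$ magog matrix with rows $(0,0,0,0,1)$, $(0,0,1,0,0)$, $(1,1,-1,1,-1)$, $(0,0,1,0,0)$, $(0,0,0,0,1)$. Its middle row admits two orientations satisfying all local rules, $e\ge0$, and your forbidden pattern. The second condition is also what makes ``state $0$'' mean $r=0$ in that forbidden pattern. Once the vertex model is pinned down, the FPL is automatic, because under the even-to-odd rule FPLs and orientations determine each other.

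\paragraph{Comparison with the paper.} The paper runs the last two steps in the opposite order: height function, then FPL, then vertex model.
\begin{itemize}
\item It draws an FPL edge separating horizontally adjacent heights $\{2k,2k+1\}$ and vertically adjacent heights $\{2k-1,2k\}$.
\item It reads the two odd-degree structures off the blocks $[k+3,k+2,k+1,k]$ and $[k+2,k+3,k,k+1]$.
\item It reads the forbidden structures off $h_{i+1,j+1}=k+2$, $h_{i,j-1}=k+1$, $h_{i,j}=k$.
\item It then orients FPL edges from even to odd vertices.
\end{itemize}
Its vertex-model definition has exactly your two extra vertex types and only the prefix condition $e\ge 0$.

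Your partial-sum route is a legitimate alternative in which every arrow has a direct meaning. If you complete it, define the arrows from partial sums rather than importing that height-parity rule. Under the parity rule, a vertical height jump of $3$ ($r=2$) never yields an FPL edge. This disagrees with $s=\min(r,1)$ at edges $v_{i,j}v_{i,j+1}$ with $i+j$ odd. For instance, for the $4\times4$ magog matrix with rows $(0,0,1,0)$, $(0,0,0,1)$, $(1,1,-1,0)$, $(0,0,1,0)$, the parity rule leaves $v_{3,2}$ with FPL-degree $0$.
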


We prove this theorem using a series of definitions followed by lemmas. We will start by proving a bijection between magog matrices and magog corner-sum matrices.

\begin{definition}
\label{def:magog_corner}
An \emph{magog corner-sum matrix of order $n$} is a matrix $(c_{i,j})_{0 \leq i,j \leq n}$ with the following properties:
\begin{itemize}
    \item $c_{0,k}=c_{k,0}=0$ for $0 \leq k \leq n$,
    \item $c_{n,k}=c_{k,n}=k$ for $0 \leq k \leq n$,
    \item $c_{i,j}-c_{i,j-1} \in \{0,1\}$,
    \item $c_{i,j} \geq c_{i-1,j}$, and
    \item \textbf{Special inequality: }$c_{i+1,j+1}+c_{i,j-1} \geq 2c_{i,j}$.
\end{itemize}
\end{definition}

\begin{figure} [htbp]
    \centering
\[
\begin{pmatrix}
    0 & 0 & 0 & 0 \\
    0 & 0 & 0 & 1 \\
    0 & 0 & 1 & 2 \\
    0 & 1 & 2 & 3
\end{pmatrix}
\ \ \begin{pmatrix}
    0 & 0 & 0 & 0 \\
    0 & 0 & 0 & 1 \\
    0 & 1 & 1 & 2 \\
    0 & 1 & 2 & 3
\end{pmatrix}
\ \ \begin{pmatrix}
    0 & 0 & 0 & 0 \\
    0 & 0 & 1 & 1 \\
    0 & 0 & 1 & 2 \\
    0 & 1 & 2 & 3
\end{pmatrix}
\ \ \begin{pmatrix}
    0 & 0 & 0 & 0 \\
    0 & 0 & 1 & 1 \\
    0 & 1 & 1 & 2 \\
    0 & 1 & 2 & 3
\end{pmatrix}
\]
\[
\begin{pmatrix}
    0 & 0 & 0 & 0 \\
    0 & 0 & 0 & 1 \\
    0 & 1 & 2 & 2 \\
    0 & 1 & 2 & 3
\end{pmatrix}
\ \ \begin{pmatrix}
    0 & 0 & 0 & 0 \\
    0 & 0 & 1 & 1 \\
    0 & 1 & 2 & 2 \\
    0 & 1 & 2 & 3
\end{pmatrix}
\ \ \begin{pmatrix}
    0 & 0 & 0 & 0 \\
    0 & 1 & 1 & 1 \\
    0 & 1 & 2 & 2 \\
    0 & 1 & 2 & 3
\end{pmatrix}
\]
    \caption{The seven magog corner-sum matrices of order 3}
    \label{fig:enter-label}
\end{figure}

\begin{lemma}
\label{lem:cs}
There is an explicit bijection between $n \times n$ magog matrices and magog corner-sum matrices of order $n$.
\end{lemma}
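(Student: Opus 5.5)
The bijection is the usual corner-sum map of Definition~\ref{def:csm}. To $A=(a_{i,j})\in\mathrm{Magog}_n$ we assign
\[
c_{i,j}=\sum_{i'\le i,\ j'\le j} a_{i',j'}, \qquad 0\le i,j\le n,
\]
with empty sums equal to $0$. The inverse is the second difference
\[
a_{i,j}=c_{i,j}-c_{i-1,j}-c_{i,j-1}+c_{i-1,j-1}.
\]
These two maps are mutually inverse bijections between all integer $n\times n$ matrices and all integer $(n+1)\times(n+1)$ matrices with zero first row and column. The content of the lemma is that the defining conditions correspond exactly. I will check this in two stages: first the square sign matrix conditions of Definition~\ref{def:ssm}, then the special inequality.

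For the first stage, I use two telescoping identities. The horizontal difference is
\[
c_{i,j}-c_{i,j-1}=\sum_{i'\le i} a_{i',j},
\]
the partial column sum of column $j$ through row $i$. The vertical difference is
\[
c_{i,j}-c_{i-1,j}=\sum_{j'\le j} a_{i,j'},
\]
the partial row sum of row $i$ through column $j$. The conditions then match as follows.
\begin{itemize}
\item Partial column sums lie in $\{0,1\}$ if and only if $c_{i,j}-c_{i,j-1}\in\{0,1\}$.
\item Partial row sums are nonnegative if and only if $c_{i,j}\ge c_{i-1,j}$.
\item Full row sums equal $1$ if and only if $c_{k,n}=k$, and full column sums equal $1$ if and only if $c_{n,k}=k$, using $c_{0,n}=c_{n,0}=0$ and telescoping.
\end{itemize}
The conditions $c_{0,k}=c_{k,0}=0$ hold by construction for $c$ coming from $A$, and are assumed for $c$ on the inverse side. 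So square sign matrices correspond exactly to matrices satisfying the first four bullets of Definition~\ref{def:magog_corner}. This is the same correspondence as for ASMs, except that the ASM row condition $\{0,1\}$ is weakened to $\ge 0$.

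For the second stage, I rewrite the three terms of the $(i,j)$-special inequality of Definition~\ref{def:magog} using the identities above:
\begin{itemize}
\item the row sum of row $i+1$ through column $j$ is $c_{i+1,j}-c_{i,j}$;
\item the column sum of column $j+1$ through row $i+1$ is $c_{i+1,j+1}-c_{i+1,j}$;
\item the column sum of column $j$ through row $i$ is $c_{i,j}-c_{i,j-1}$.
\end{itemize}
The left-hand side of the special inequality is therefore
\[
(c_{i+1,j}-c_{i,j})+(c_{i+1,j+1}-c_{i+1,j})-(c_{i,j}-c_{i,j-1})=c_{i+1,j+1}+c_{i,j-1}-2c_{i,j}.
\]
Hence the $(i,j)$-special inequality for $A$ is literally the corner-sum special inequality at $(i,j)$, for $1\le i,j\le n-2$.

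The main obstacle is the index range, since Definition~\ref{def:magog_corner} does not state where the special inequality is imposed. I take it for all $(i,j)$ at which it makes sense, namely $0\le i\le n-1$ and $1\le j\le n-1$, and show the extra cases hold automatically for any matrix satisfying the first four bullets.
\begin{itemize}
\item For $i=0$, the inequality reads $c_{1,j+1}\ge 0$, which holds because columns increase from the zero row.
\item For $j=n-1$, use $c_{i+1,n}=i+1$ and $c_{i,n-2}\ge c_{i,n-1}-1$ to get a left-hand side of at least $i-c_{i,n-1}$. This is $\ge 0$ because $c_{i,n-1}\le c_{i,n}=i$, as a row increases by at most $1$ per step.
\item For $i=n-1$ the argument is symmetric: $c_{n,j+1}=j+1$, $c_{n-1,j-1}\ge c_{n-1,j}-1$, and $c_{n-1,j}\le c_{n,j}=j$ give a left-hand side of at least $j-c_{n-1,j}\ge 0$.
\end{itemize}
So whichever range is intended, the two sets of conditions agree, and the corner-sum map and second-difference map are inverse bijections between $\mathrm{Magog}_n$ and magog corner-sum matrices of order $n$.
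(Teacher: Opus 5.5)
Your proposal is correct and follows the same route as the paper: it uses the corner-sum map, translates each square-sign-matrix condition into the corresponding corner-sum condition via partial-sum differences, and obtains the special inequality by the same rewriting (your direct substitution giving $c_{i+1,j+1}+c_{i,j-1}-2c_{i,j}$ is a cleaner form of the paper's add-double-sums manipulation). Your version is more complete than the paper's, which only checks the forward direction: you give the inverse explicitly, argue each correspondence as an equivalence, and verify that the corner-sum special inequality holds automatically at the boundary indices $i\in\{0,n-1\}$ or $j=n-1$, which the paper's definition leaves unspecified (the one check you leave unstated, that second differences lie in $\{0,\pm1\}$, is immediate since they are differences of consecutive partial column sums in $\{0,1\}$).
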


\begin{proof}
    Let $(a_{i,j})$ be a magog matrix. We start by defining the matrix $(c_{i,j})_{0\leq i \leq n,0\leq j \leq n}$ by setting $c_{i,j} = \displaystyle\sum_{i'=1}^i \displaystyle\sum_{j'=1}^j a_{i',j'}$. The matrix $(c_{i,j})$ satisfies Definition~\ref{def:magog_corner} because of the following. The entries of the first row and first column are zero by definition. Since the sum of every row in a magog matrix is one, we know $c_{i,n}=i$. Similarly, $c_{n,j}=j$ because the sum of every column in a magog matrix is one. Since the nonzero entries in every column of a magog matrix alternate in sign, and thus, the column-partial sum of any cell is either zero or one, we know the rows of $(c_{i,j})$ are weakly increasing by at most one. By the same logic, since the left partial sum of any cell in a magog matrix is non-negative, the columns of our corner-sum matrix must be weakly increasing. Lastly, we derive the special inequality for corner-sum matrices by starting with the special inequality for magog matrices and adding terms to both sides to create  double summations.
    \begin{align*}
        \sum_{j'=1}^j a_{i+1,j'} + \sum_{i'=1}^{i+1} a_{i',j+1} - \sum_{i'=1}^i a_{i',j} &\geq 0 \\
        \sum_{j'=1}^j a_{i+1,j'} + \sum_{i'=1}^{i+1} a_{i',j+1} &\geq \sum_{i'=1}^i a_{i',j} \\
        \sum_{j'=1}^j a_{i+1,j'} + \sum_{i'=1}^{i+1} a_{i',j+1} +
        \sum_{i'=1}^i \sum_{j'=1}^j a_{i',j'} &\geq \sum_{i'=1}^i a_{i',j} +\sum_{i'=1}^i \sum_{j'=1}^j a_{i',j'}\\
        \sum_{i'=1}^{i+1} \sum_{j'=1}^{j+1} a_{i',j'} &\geq \sum_{i'=1}^i \sum_{j'=1}^j a_{i,j} - \sum_{i'=1}^i \sum_{j'=1}^{j-1} a_{i',j'} + \sum_{i'=1}^i \sum_{j'=1}^j a_{i',j'}
    \end{align*}
    Now, we apply the definition of $c_{i,j}$:
    \begin{align*}
        c_{i+1,j+1} &\geq c_{i,j} - c_{i,j-1} + c_{i,j} \\
        c_{i+1,j+1} + c_{i,j-1} &\geq 2c_{i,j}
    \end{align*}
    which gives the desired result.
\end{proof}

\begin{definition}
\label{def:magog_height}
An \emph{magog height-function matrix of order $n$} is a matrix $(h_{i,j})_{0 \leq i,j \leq n}$ with the following properties:
\begin{itemize}
    \item $h_{0,k}=h_{k,0}=k$ for $0 \leq k \leq n$,
    \item $h_{n,k}=h_{k,n}=n-k$ for $0 \leq k \leq n$,
    \item $h_{i,j}-h_{i,j-1} =\pm1$,
    \item $h_{i,j} - h_{i-1,j} \in \{1-2k, \text{for } k \geq 0\}$, and
    \item \textbf{Special inequality: }$h_{i+1,j+1}+h_{i,j-1} \leq 2h_{i,j}+1$.
\end{itemize}
\end{definition}

\begin{figure} [H]
    \centering
$$
\begin{pmatrix}
    0 & 1 & 2 & 3 \\
    1 & 2 & 3 & 2 \\
    2 & 3 & 2 & 1 \\
    3 & 2 & 1 & 0
\end{pmatrix}
\ \ \begin{pmatrix}
    0 & 1 & 2 & 3 \\
    1 & 2 & 3 & 2 \\
    2 & 1 & 2 & 1 \\
    3 & 2 & 1 & 0
\end{pmatrix}
\ \ \begin{pmatrix}
    0 & 1 & 2 & 3 \\
    1 & 2 & 1 & 2 \\
    2 & 3 & 2 & 1 \\
    3 & 2 & 1 & 0
\end{pmatrix}
\ \ \begin{pmatrix}
    0 & 1 & 2 & 3 \\
    1 & 2 & 1 & 2 \\
    2 & 1 & 2 & 1 \\
    3 & 2 & 1 & 0
\end{pmatrix}
$$
$$
\begin{pmatrix}
    0 & 1 & 2 & 3 \\
    1 & 2 & 3 & 2 \\
    2 & 1 & 0 & 1 \\
    3 & 2 & 1 & 0
\end{pmatrix}
\ \ \begin{pmatrix}
    0 & 1 & 2 & 3 \\
    1 & 2 & 1 & 2 \\
    2 & 1 & 0 & 1 \\
    3 & 2 & 1 & 0
\end{pmatrix}
\ \ \begin{pmatrix}
    0 & 1 & 2 & 3 \\
    1 & 0 & 1 & 2 \\
    2 & 1 & 0 & 1 \\
    3 & 2 & 1 & 0
\end{pmatrix}
$$
    \caption{The seven magog height-function matrices of order 3}
    \label{fig:enter-label}
\end{figure}

\begin{lemma}
\label{lem:hf}
There is an explicit bijection between $n\times n$ magog matrices and magog height-function matrices of order $n$.
\end{lemma}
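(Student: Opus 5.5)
By Lemma~\ref{lem:cs} it suffices to give an explicit bijection between magog corner-sum matrices of order $n$ and magog height-function matrices of order $n$. Composing the two then gives the bijection with magog matrices. Following the ASM case (Definition~\ref{def:hfm}), I would use the map $h_{i,j} = i + j - 2c_{i,j}$. Its inverse is $c_{i,j} = (i+j-h_{i,j})/2$, so the plan is to check that each defining condition of Definition~\ref{def:magog_corner} translates exactly into the corresponding condition of Definition~\ref{def:magog_height}, and conversely.

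The boundary and adjacency conditions are routine.
\begin{itemize}
\item If $c_{0,k}=c_{k,0}=0$ then $h_{0,k}=h_{k,0}=k$. If $c_{n,k}=c_{k,n}=k$ then $h_{n,k}=n+k-2k=n-k$.
\item Horizontally, $h_{i,j}-h_{i,j-1} = 1 - 2(c_{i,j}-c_{i,j-1})$. This is $\pm 1$ exactly when $c_{i,j}-c_{i,j-1}\in\{0,1\}$.
\item Vertically, $h_{i,j}-h_{i-1,j} = 1-2(c_{i,j}-c_{i-1,j})$. This lies in $\{1-2k : k\ge 0\}$ exactly when $c_{i,j}-c_{i-1,j}$ is a nonnegative integer.
\end{itemize}
Integrality of $c$ is automatic from $c_{i,0}=0$ and integer row increments. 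Conversely, for $c=(i+j-h)/2$ to be an integer we need $h_{i,j}\equiv i+j \pmod 2$. This follows from $h_{0,k}=k$ together with horizontal steps of $\pm1$, so $c$ is integer-valued.

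For the special inequality, substitute into $c_{i+1,j+1}+c_{i,j-1}\ge 2c_{i,j}$. We have
\[
c_{i+1,j+1}+c_{i,j-1}-2c_{i,j} = \tfrac12\bigl[(i+j+2) + (i+j-1) - 2(i+j)\bigr] - \tfrac12\bigl[h_{i+1,j+1}+h_{i,j-1}-2h_{i,j}\bigr],
\]
and the first bracket equals $1$. So the corner-sum special inequality is equivalent to $h_{i+1,j+1}+h_{i,j-1}\le 2h_{i,j}+1$, which is exactly the height-function special inequality. The index ranges correspond, since both inequalities are imposed at the same pairs $(i,j)$, inherited from Definition~\ref{def:magog} through Lemma~\ref{lem:cs}.

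Since the map $c\mapsto h$ is affine and invertible entrywise, and every condition corresponds to its counterpart in both directions, it is a bijection between the two sets. The only real subtlety is showing that the inverse lands in integer matrices, handled by the parity argument above. As a sanity check, applying the map to the seven matrices of order 3 in the first figure should reproduce the seven matrices in the second figure.
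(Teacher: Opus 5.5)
Your proposal is correct and takes the same route as the paper: compose Lemma~\ref{lem:cs} with the affine map $h_{i,j}=i+j-2c_{i,j}$, translate each defining condition, and obtain the special inequality by the same substitution. Your version is a bit more careful than the paper's, since you check the conditions in both directions and verify that $(i+j-h_{i,j})/2$ is an integer. One small fix in that parity step: use the first column $h_{k,0}=k$ with the horizontal $\pm1$ steps, or equivalently the first row with the odd vertical steps.
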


\begin{proof}
We begin by mapping an $n\times n$ magog matrix $(a_{i,j})_{0< i\leq n,0<j\leq n }$ to its corresponding magog corner-sum matrix $(c_{i,j})$. Then, we define $(h_{i,j})_{0< i\leq n,0<j\leq n }$ as $h_{i,j}=i+j-2c_{i,j}$. The matrix $(h_{i,j})$ satisfies Definition~\ref{def:magog_height} because of the following. The condition that the first row and the first column of a magog corner-sum matrix are $0$ guarantees that the first row and first column start at $0$ and increase by one. Since the last row and last column of a magog corner-sum matrix start from 0 and increase by 1, we know $h_{n,j}=n+j-2c_{n,j}=n+j-2j=n-j$ and $h_{i,n}=i+n-2c_{i,n}=i+n-2i=n-i$. The condition that the entries of the corner-sum matrix weakly increase by at most one from left to right is equivalent to the condition that adjacent entries in ($h_{i,j}$) differ by exactly one. Similarly, the condition that the entries of the corner-sum matrix weakly increase from top to bottom is equivalent to the condition that the adjacent entries of a magog height-function matrix increase by 1 or decrease by an odd number from top to bottom. Lastly, we derive the special inequality for magog height-function matrices by starting with the special inequality for magog corner-sum matrices. We start with the special inequality of magog corner-sum matrices, double both sides, then add terms to both sides. This gives us the special inequality of magog height-function matrices.
    \begin{align*}
        c_{i+1,j+1}+c_{i,j-1} &\geq 2 c_{i,j} \\
        -2c_{i+1,j+1}-2c_{i,j-1} &\leq -4 c_{i,j} \\
        2i+2j+1-2c_{i+1,j+1}-2c_{i,j-1}&\leq 2i+2j+1-4 c_{i,j} \\
        i+1+j+1-2c_{i+1,j+1}+i+j-1-2c_{i,j-1}&\leq 2(i+j-2 c_{i,j})+1 \\
        h_{i+1,j+1}+h_{i,j-1}&\leq 2h_{i,j}+1 
    \end{align*}
\end{proof}

For the next definition, recall the graph $G_{m,n}$ from Definition~\ref{def:Gmn}.
\begin{definition}
     A \emph{magog fully packed loop configuration of order} $n$ is a subgraph of $G_{n,n}$ with the following properties:
     \begin{itemize}
         \item the boundary edges follow alternating boundary conditions
         \item the internal boundaries have degree 1, 2, or 3
         \item the only allowed odd degree structures are shown below where the parity of a vertex equals the parity of $i+j$
         \begin{center}
         \begin{tikzpicture} [scale=0.4]

\draw[thick] (0,0) rectangle (8,-8);
\draw[thick] (4,0) -- (4,-8);
\draw[thick] (0,-4) -- (8,-4);

\node at (2,1) {\tiny Right-Facing};
\node at (6,1) {\tiny Left-Facing};
\node[rotate=90] at (-1,-2) {Even};
\node[rotate=90] at (-1,-6) {Odd};

\fill (2, -2) circle (2pt);
\fill (1, -2) circle (2pt);
\fill (3, -2) circle (2pt);
\fill (2, -1) circle (2pt);
\fill (2, -3) circle (2pt);
\draw[thick] (1, -2) -- (2, -2);

\fill (6, -2) circle (2pt);
\fill (5, -2) circle (2pt);
\fill (7, -2) circle (2pt);
\fill (6, -1) circle (2pt);
\fill (6, -3) circle (2pt);
\draw[thick] (6, -1) -- (6, -3);
\draw[thick] (5, -2) -- (6, -2);

\fill (2, -6) circle (2pt);
\fill (1, -6) circle (2pt);
\fill (3, -6) circle (2pt);
\fill (2, -5) circle (2pt);
\fill (2, -7) circle (2pt);
\draw[thick] (2, -6) -- (3, -6);
\draw[thick] (2, -5) -- (2, -7);

\fill (6, -6) circle (2pt);
\fill (5, -6) circle (2pt);
\fill (7, -6) circle (2pt);
\fill (6, -5) circle (2pt);
\fill (6, -7) circle (2pt);
\draw[thick] (6, -6) -- (7, -6);

\end{tikzpicture}
        \end{center}
         \item for every $i,j$, [$\#$ of left-facing vertices from $(i,0)$ to $(i,j)$] minus [$\#$ of right-facing vertices from $(i,0)$ to $(i,j)$] $\geq 0$
         \item \textbf{Special forbidden structure:} Magog FPLs cannot contain either of the follow structures.
         \begin{center}
         \begin{tikzpicture}[scale=0.5]
\fill (-1, 0) circle (2pt);
\fill (-2, 0) circle (2pt);
\fill (-3, 0) circle (2pt);
\fill (-2, 1) circle (2pt);
\fill (-2, -1) circle (2pt);
\fill (-3, 1) circle (2pt);
\draw[fill=none] (-3, 1) circle (0.33) node [yshift=0.5cm] {Odd};
\draw[thick] (-2, 1) -- (-2, 0);
\draw[thick] (-2, 0) -- (-1, 0);
\draw[thick] (-3, 1) -- (-3, 0);

\fill (3, 0) circle (2pt);
\fill (2, 0) circle (2pt);
\fill (1, 0) circle (2pt);
\fill (2, 1) circle (2pt);
\fill (1, 1) circle (2pt);
\fill (2, -1) circle (2pt);
\draw[fill=none] (1, 1) circle (0.33) node [yshift=0.5cm] {Even};
\draw[thick] (1, 0) -- (2, 0);
\draw[thick] (2, -1) -- (2, 0);
    
\end{tikzpicture}
    \end{center}
     \end{itemize}
\end{definition}

\begin{figure} [H]
    \centering
    \begin{tikzpicture}[scale=0.5]
\foreach \a in {0,...,2}{
    \fill[black] (\a,3) circle (2pt);
}
\foreach \a in {0,...,2}{
    \fill[black] (\a,-1) circle (2pt);
}
\foreach \a in {0,...,2}{
    \fill[black] (3,\a) circle (2pt);
}
\foreach \a in {0,...,2}{
    \fill[black] (-1,\a) circle (2pt);
}

\foreach \x in {0,...,2}{
  \foreach \y in {0,...,2}{
    \fill[black] (\x,\y) circle (2pt);
  }
}
\draw[thick] (0,0)--++(0,-1);
\draw[thick] (2,0)--++(0,-1);
\draw[thick] (0,2)--++(0,1);
\draw[thick] (2,2)--++(0,1);
\draw[thick] (0,1)--++(-1,0);
\draw[thick] (2,1)--++(1,0);
\draw[thick] (0,2)--++(1,0)--++(0,-2)--++(1,0);
\draw[thick] (0,0)--++(0,1);
\draw[thick] (2,1)--++(0,1);
\end{tikzpicture}
\ \ \
\begin{tikzpicture}[scale=0.5]
\foreach \a in {0,...,2}{
    \fill[black] (\a,3) circle (2pt);
}
\foreach \a in {0,...,2}{
    \fill[black] (\a,-1) circle (2pt);
}
\foreach \a in {0,...,2}{
    \fill[black] (3,\a) circle (2pt);
}
\foreach \a in {0,...,2}{
    \fill[black] (-1,\a) circle (2pt);
}

\foreach \x in {0,...,2}{
  \foreach \y in {0,...,2}{
    \fill[black] (\x,\y) circle (2pt);
  }
}
\draw[thick] (0,0)--++(0,-1);
\draw[thick] (2,0)--++(0,-1);
\draw[thick] (0,2)--++(0,1);
\draw[thick] (2,2)--++(0,1);
\draw[thick] (0,1)--++(-1,0);
\draw[thick] (2,1)--++(1,0);
\draw[thick] (0,1)--++(1,0)--++(0,1)--++(-1,0);
\draw[thick] (0,0)--++(2,0);
\draw[thick] (2,1)--++(0,1);
\end{tikzpicture}
\ \ \
\begin{tikzpicture}[scale=0.5]
\foreach \a in {0,...,2}{
    \fill[black] (\a,3) circle (2pt);
}
\foreach \a in {0,...,2}{
    \fill[black] (\a,-1) circle (2pt);
}
\foreach \a in {0,...,2}{
    \fill[black] (3,\a) circle (2pt);
}
\foreach \a in {0,...,2}{
    \fill[black] (-1,\a) circle (2pt);
}

\foreach \x in {0,...,2}{
  \foreach \y in {0,...,2}{
    \fill[black] (\x,\y) circle (2pt);
  }
}
\draw[thick] (0,0)--++(0,-1);
\draw[thick] (2,0)--++(0,-1);
\draw[thick] (0,2)--++(0,1);
\draw[thick] (2,2)--++(0,1);
\draw[thick] (0,1)--++(-1,0);
\draw[thick] (2,1)--++(1,0);
\draw[thick] (0,0)--++(0,1);
\draw[thick] (2,0)--++(-1,0)--++(0,1)--++(1,0);
\draw[thick] (0,2)--++(2,0);
\end{tikzpicture}
\ \ \
\begin{tikzpicture}[scale=0.5]
\foreach \a in {0,...,2}{
    \fill[black] (\a,3) circle (2pt);
}
\foreach \a in {0,...,2}{
    \fill[black] (\a,-1) circle (2pt);
}
\foreach \a in {0,...,2}{
    \fill[black] (3,\a) circle (2pt);
}
\foreach \a in {0,...,2}{
    \fill[black] (-1,\a) circle (2pt);
}

\foreach \x in {0,...,2}{
  \foreach \y in {0,...,2}{
    \fill[black] (\x,\y) circle (2pt);
  }
}
\draw[thick] (0,0)--++(0,-1);
\draw[thick] (2,0)--++(0,-1);
\draw[thick] (0,2)--++(0,1);
\draw[thick] (2,2)--++(0,1);
\draw[thick] (0,1)--++(-1,0);
\draw[thick] (2,1)--++(1,0);
\draw[thick] (0,0)--++(2,0);
\draw[thick] (0,1)--++(2,0);
\draw[thick] (0,2)--++(2,0);
\end{tikzpicture}
$$
$$
\begin{tikzpicture}[scale=0.5]
\foreach \a in {0,...,2}{
    \fill[black] (\a,3) circle (2pt);
}
\foreach \a in {0,...,2}{
    \fill[black] (\a,-1) circle (2pt);
}
\foreach \a in {0,...,2}{
    \fill[black] (3,\a) circle (2pt);
}
\foreach \a in {0,...,2}{
    \fill[black] (-1,\a) circle (2pt);
}

\foreach \x in {0,...,2}{
  \foreach \y in {0,...,2}{
    \fill[black] (\x,\y) circle (2pt);
  }
}
\draw[thick] (0,0)--++(0,-1);
\draw[thick] (2,0)--++(0,-1);
\draw[thick] (0,2)--++(0,1);
\draw[thick] (2,2)--++(0,1);
\draw[thick] (0,1)--++(-1,0);
\draw[thick] (2,1)--++(1,0);
\draw[thick] (0,0)--++(1,0)--++(0,2)--++(-1,0);
\draw[thick] (2,0)--++(0,2);
\draw[thick] (0,1)--++(1,0);
\end{tikzpicture}
\ \ \
\begin{tikzpicture}[scale=0.5]
\foreach \a in {0,...,2}{
    \fill[black] (\a,3) circle (2pt);
}
\foreach \a in {0,...,2}{
    \fill[black] (\a,-1) circle (2pt);
}
\foreach \a in {0,...,2}{
    \fill[black] (3,\a) circle (2pt);
}
\foreach \a in {0,...,2}{
    \fill[black] (-1,\a) circle (2pt);
}

\foreach \x in {0,...,2}{
  \foreach \y in {0,...,2}{
    \fill[black] (\x,\y) circle (2pt);
  }
}
\draw[thick] (0,0)--++(0,-1);
\draw[thick] (2,0)--++(0,-1);
\draw[thick] (0,2)--++(0,1);
\draw[thick] (2,2)--++(0,1);
\draw[thick] (0,1)--++(-1,0);
\draw[thick] (2,1)--++(1,0);
\draw[thick] (0,2)--++(2,0);
\draw[thick] (0,0)--++(1,0)--++(0,1)--++(-1,0);
\draw[thick] (2,0)--++(0,1);
\end{tikzpicture}
\ \ \
\begin{tikzpicture}[scale=0.5]
\foreach \a in {0,...,2}{
    \fill[black] (\a,3) circle (2pt);
}
\foreach \a in {0,...,2}{
    \fill[black] (\a,-1) circle (2pt);
}
\foreach \a in {0,...,2}{
    \fill[black] (3,\a) circle (2pt);
}
\foreach \a in {0,...,2}{
    \fill[black] (-1,\a) circle (2pt);
}

\foreach \x in {0,...,2}{
  \foreach \y in {0,...,2}{
    \fill[black] (\x,\y) circle (2pt);
  }
}
\draw[thick] (0,0)--++(0,-1);
\draw[thick] (2,0)--++(0,-1);
\draw[thick] (0,2)--++(0,1);
\draw[thick] (2,2)--++(0,1);
\draw[thick] (0,1)--++(-1,0);
\draw[thick] (2,1)--++(1,0);
\draw[thick] (0,0)--++(1,0)--++(0,2)--++(1,0);
\draw[thick] (0,1)--++(0,1);
\draw[thick] (2,0)--++(0,1);
\end{tikzpicture}

    \caption{The seven magog fully packed loops of order 3}
    \label{fig:placeholder}
\end{figure}

\begin{lemma}
\label{lem:fpl}
There is an explicit bijection between magog height function matrices of order $n$ and magog fully packed loops of order $n$.
\end{lemma}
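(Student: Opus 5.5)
We construct the bijection by the standard height-function-to-FPL rule and then check that each defining condition of a magog height-function matrix matches a condition in the definition of a magog FPL. Regard the entries $h_{i,j}$, $0\le i,j\le n$, as living on the faces of $G_{n,n}$: entry $h_{i,j}$ sits on the face whose corners are $v_{i,j},v_{i+1,j},v_{i,j+1},v_{i+1,j+1}$. Each interior edge of $G_{n,n}$ separates two horizontally or vertically adjacent entries.

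The rule is parity based. For a vertical edge $v_{i,j}v_{i+1,j}$, separating $h_{i,j-1}$ from $h_{i,j}$, the difference $h_{i,j}-h_{i,j-1}=\pm1$ and the parity of the vertex fix a binary choice; include the edge exactly when $\max(h_{i,j-1},h_{i,j})\equiv i+j \pmod 2$, as in Propp's ASM correspondence. For a horizontal edge separating $h_{i-1,j}$ from $h_{i,j}$, the difference lies in $\{1,-1,-3,\dots\}$, so the two values still have opposite parity. Use the same rule, with the parity of the larger entry compared against the parity of the endpoint.

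The inverse is determined locally. Boundary entries are fixed, and scanning each row left to right, the presence or absence of each vertical edge recovers the sign of $h_{i,j}-h_{i,j-1}$. The whole matrix is therefore determined by the vertical edges alone, given the first column $h_{k,0}=k$. We then check that the horizontal edges are consistent.

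The verification goes through the corner-sum picture of Lemma~\ref{lem:hf}. A vertical step down by $-1-2k$ with $k\ge1$ corresponds to a column-partial-sum jump of $c_{i,j}-c_{i-1,j}$, which is the partial row sum. Locally, a vertex where a partial row sum changes by $+1$ or $-1$ is where $a_{i,j}=\pm1$ with the partial row sum exceeding one. At such vertices the degree becomes $1$ or $3$ instead of $2$. The steps, in order, are:

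\begin{enumerate}
\item Check that the alternating boundary conditions correspond to the boundary rows and columns $0,1,\dots,n$ and $n,\dots,0$, exactly as in the ASM case.
\item Classify the local $2\times 2$ configurations of heights around an interior vertex $v_{i,j}$ (the four surrounding entries) by the corner-sum increments. Show that degree $2$ occurs exactly when $a_{i,j}\in\{0,\pm1\}$ behaves as in an ASM. The degree-$1$ and degree-$3$ structures occur precisely when the vertical difference is $\le -3$ or the partial row sum is at least $2$. Match these with the four pictured odd-degree structures, with parity of $i+j$ and left/right orientation, and identify right-facing vertices with $+1$ entries raising the partial row sum past $1$ and left-facing vertices with the corresponding $-1$ entries.
\item Translate the column condition $h_{i,j}-h_{i-1,j}\in\{1-2k\}$ into the running-count condition that left-facing minus right-facing vertices along a row is nonnegative. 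This is the FPL form of nonnegative partial row sums.
\item Translate the special inequality $h_{i+1,j+1}+h_{i,j-1}\le 2h_{i,j}+1$. By the remark after Definition~\ref{def:magog}, it fails exactly when the column-partial sum at $(i,j)$ is $1$ while the partial row sum at $(i+1,j)$ and the column-partial sum at $(i+1,j+1)$ are both $0$. In heights, this fixes the three entries and their neighbors, hence the edges around $v_{i+1,j+1}$ and the adjacent vertex. These should give exactly the two forbidden pictures, one for each parity.
\end{enumerate}

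The main obstacle is step 2 (with step 4 depending on it): producing an exhaustive case analysis of the local height patterns and confirming that the allowed odd-degree pictures and the forbidden structures are exactly the images. I would first tabulate the $2\times2$ height patterns up to a common shift, separated by parity, and check this table against the seven order-$3$ examples in the figures before writing the general argument.
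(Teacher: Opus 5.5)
Your plan follows the paper's proof: overlay the height matrix on $G_{n,n}$, apply Propp's local rule, match the boundary, local structures, running count and special inequality, and invert row by row from the vertical edges. However, the local rule you actually write down is wrong. Because $h_{p,q}=p+q-2c_{p,q}\equiv p+q \pmod 2$, your condition $\max(h_{i,j-1},h_{i,j})\equiv i+j$ is equivalent to $h_{i,j}>h_{i,j-1}$. The vertex parity cancels, so no checkerboard dependence is left, and step 1 fails immediately:
\begin{itemize}
\item on the top row $h_{0,j}=j$, so every edge $v_{0,j}v_{1,j}$ is selected, not only those with $j$ odd;
\item at $v_{1,1}$, whenever $a_{1,1}=0$, both the up edge and the down edge are selected.
\end{itemize}
Your horizontal rule has the same defect, and its ``endpoint'' is ambiguous. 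The paper's rule uses the parity of the larger entry alone: separate horizontally adjacent entries $2k,2k+1$ (larger entry odd) and vertically adjacent entries $2k-1,2k$ (larger entry even).

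Once this is fixed, let $\alpha$ be the partial sum of column $j$ through row $i-1$ and $\beta$ the partial sum of row $i$ through column $j-1$. The four edges at $v_{i,j}$ are then determined by $\alpha$, $\beta$, $a_{i,j}$ and the parity of $i+j$. That table is exactly what your step 2 needs, and it contradicts two of your claims.
\begin{itemize}
\item \emph{Left/right is reversed.} The paper's left-facing pattern $[k+2,k+3,k,k+1]$ is a $+1$ with $\beta=1$, raising the partial row sum from $1$ to $2$. The right-facing pattern $[k+3,k+2,k+1,k]$ is a $-1$ with $\beta=2$. With your assignment, step 3 would produce the opposite inequality.
\item \emph{Your odd-degree criterion is wrong.} The right-hand vertical drop at $v_{i,j}$ is $1-2(\beta+a_{i,j})$, so drops of $3$ or more also occur when $a_{i,j}=0$ and $\beta\ge 2$. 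If those edges are decided by parity, as you propose, such a vertex has degree $2$. With that convention, odd degree occurs exactly when $a_{i,j}=1,\beta\ge1$ or $a_{i,j}=-1,\beta\ge2$.
\end{itemize}

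Finally, make the drop-of-$3$ convention explicit, because it differs from the paper's text, which joins only $2k-1$ and $2k$. The two maps disagree from $n=4$ on. For the magog matrix with rows $(0,0,1,0),(0,0,0,1),(1,1,-1,0),(0,0,1,0)$, the literal rule leaves $v_{3,2}$ and $v_{3,3}$ isolated. Only the parity convention ever produces the pictured degree-$1$ structures.
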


\begin{proof}
     Starting with a magog height function matrix $(h_{i,j})$ of order $n$, overlay $G_{n,n}$ so that each interior number in the partial
height-function matrix has four surrounding vertices. Separate two horizontally adjacent
numbers by an edge if the numbers are $2k$ and $2k+ 1$ (in either order) for any integer $k$.
Separate two vertically adjacent numbers by an edge if the numbers are $2k-1$ and $2k$ (in either order) for any integer $k$. Since $h_{0,k}= h_{k,0}=k$ for $0\leq k\leq n$, and $h_{n,k}= h_{k,n}=n-k$ for $0\leq k\leq n$,
we will have the required alternating boundary conditions for a magog fully packed loop of order $n$.
The adjacency conditions of magog height functions dictate the allowed degrees/structures in the magog fully packed loops. When $[h_{i,j},h_{i,j+1},h_{i+1,j+1},h_{i+1,j}]=[k+3,k+2,k+1,k]$, we get a right-facing structure. When $[h_{i,j},h_{i,j+1},h_{i+1,j+1},h_{i+1,j}]=[k+2,k+3,k,k+1]$, we get a left-facing structure. Every other valid set produces a degree 2 structure. The fourth condition follows from the fact that the row-partial sum of magog matrices must always be non-negative. The special forbidden structure is derived by applying the above map to configurations which violate the special inequality for magog height function matrices. Specifically, the special inequality for magog height-function matrices is  violated when $h_{i+1,j+1}=k+2,h_{i,j-1}=k+1$, and $h_{i,j}=k$.  Applying the bijection to these vertices, we obtain the forbidden structures of the magog fully packed loops. For the reverse map, we start
with the magog fully-packed loop configuration and fill in the boundary conditions of a magog height-function matrix. Then, we use the rule described above in reverse,
using the third and fourth conditions of magog height function matrices. \end{proof}

\begin{definition}
\label{def:magog_vertex}
    A \emph{magog vertex model of order n} is a directed graph whose underlying graph is $G_{m,n}$ with the following properties:
    \begin{itemize}
        \item the graph satisfies domain-wall boundary conditions
        \item each interior vertex has either 1, 2, or 3 incoming arrows
        \item the only allowed odd degree structures are shown below
        \begin{center}
        \begin{tikzpicture}[scale=0.9]
  \begin{scope}[xshift=0cm]
    \draw[-{Stealth}] (1,0) -- (0.3,0); \draw (0,0)--(1,0);
    \draw[-{Stealth}] (0,0) -- (0,0.5); \draw (0,0.5)--(0,1);
    \draw[-{Stealth}] (0,0) -- (-0.5,0); \draw (-0.5,0)--(-1,0);
    \draw[-{Stealth}] (0,0) -- (0,-0.5); \draw (0,-0.5)--(0,-1);
  \end{scope}
  \begin{scope}[xshift=3cm]
    \draw[-{Stealth}] (0,0) -- (-0.5,0); \draw (-0.5,0)--(-1,0);
    \draw[-{Stealth}] (0,1) -- (0,0.3); \draw (0,0)--(0,1);
    \draw[-{Stealth}] (1,0) -- (0.3,0); \draw (0,0)--(1,0);
    \draw[-{Stealth}] (0,-1) -- (0,-0.3); \draw (0,0)--(0,-1);
  \end{scope}
  \end{tikzpicture}
    \end{center}
        \item for every $i,j$, [$\#$ of degree 1 vertices from $(i,0)$ to $(i,j)$] minus [$\#$ of degree 3 vertices from $(i,0)$ to $(i,j)$] $\geq 0$ 
        \item \textbf{Special forbidden structure:} Magog vertex models cannot contain the following structure.
        \begin{center}
            \begin{tikzpicture}
                \begin{scope}[xshift=0cm]
    \draw[-{Stealth}] (0,0) -- (0.5,0); \draw (0,0)--(1,0);
    \draw[-{Stealth}] (0,0) -- (0,0.5); \draw (0,0.5)--(0,1);
    \draw[-{Stealth}] (-1,0) -- (-0.3,0); \draw (0,0)--(-1,0);
    \draw[-{Stealth}] (0,-1) -- (0,-0.3); \draw (0,0)--(0,-1);
    \draw[-{Stealth}] (-1,1) -- (-1,0.3); \draw (-1,1)--(-1,0);
  \end{scope}
            \end{tikzpicture}
        \end{center}
    \end{itemize}
\end{definition}

\begin{figure} [htbp]
    \centering
    \begin{tikzpicture}[scale=1.2]
        \boundcond;
        \iceup{0.5}{0.5};
        \iceI{0.5}{1};
        \iceI{0.5}{1.5};
        \iceIII{1}{0.5};
        \iceup{1}{1};
        \iceI{1}{1.5};
        \iceIII{1.5}{0.5};
        \iceIII{1.5}{1};
        \iceup{1.5}{1.5};
    \end{tikzpicture}
    \ \ \ 
    \begin{tikzpicture}[scale=1.2]
        \boundcond;
        \iceIV{0.5}{0.5};
        \iceup{0.5}{1};
        \iceI{0.5}{1.5};
        \iceup{1}{0.5};
        \iceII{1}{1};
        \iceI{1}{1.5};
        \iceIII{1.5}{0.5};
        \iceIII{1.5}{1};
        \iceup{1.5}{1.5};
    \end{tikzpicture}
    \ \ \ 
    \begin{tikzpicture}[scale=1.2]
        \boundcond;
        \iceup{0.5}{0.5};
        \iceI{0.5}{1};
        \iceI{0.5}{1.5};
        \iceIII{1}{0.5};
        \iceIV{1}{1};
        \iceup{1}{1.5};
        \iceIII{1.5}{0.5};
        \iceup{1.5}{1};
        \iceII{1.5}{1.5};
    \end{tikzpicture}
    \ \ \ 
    \begin{tikzpicture}[scale=1.2]
        \boundcond;
        \iceIV{0.5}{0.5};
        \iceup{0.5}{1};
        \iceI{0.5}{1.5};
        \iceup{1}{0.5};
        \iceside{1}{1};
        \iceup{1}{1.5};
        \iceIII{1.5}{0.5};
        \iceup{1.5}{1};
        \iceII{1.5}{1.5};
    \end{tikzpicture}
    \\
    \begin{tikzpicture}[scale=1.2]
        \boundcond;
        \iceIV{0.5}{0.5};
        \iceup{0.5}{1};
        \iceI{0.5}{1.5};
        \iceIV{1}{0.5};
        \magogthree{1}{1};
        \iceI{1}{1.5};
        \iceup{1.5}{0.5};
        \magogone{1.5}{1};
        \iceup{1.5}{1.5};
    \end{tikzpicture}
    \ \ \
    \begin{tikzpicture}[scale=1.2]
        \boundcond;
        \iceIV{0.5}{0.5};
        \iceIV{0.5}{1};
        \iceup{0.5}{1.5};
        \iceup{1}{0.5};
        \iceI{1}{1};
        \iceII{1}{1.5};
        \iceIII{1.5}{0.5};
        \iceup{1.5}{1};
        \iceII{1.5}{1.5};
    \end{tikzpicture}
    \ \ \
    \begin{tikzpicture}[scale=1.2]
        \boundcond;
        \iceIV{0.5}{0.5};
        \iceIV{0.5}{1};
        \iceup{0.5}{1.5};
        \iceIV{1}{0.5};
        \iceup{1}{1};
        \iceII{1}{1.5};
        \iceup{1.5}{0.5};
        \iceII{1.5}{1};
        \iceII{1.5}{1.5};
    \end{tikzpicture}
    \caption{The seven magog vertex models of order 3}
    \label{fig:enter-label}
\end{figure}

\begin{lemma}
\label{lem:vertex}
There is an explicit bijection between magog vertex models and magog height-function matrices of order $n$.
\end{lemma}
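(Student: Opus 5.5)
The bijection will mirror the classical square-ice/height-function correspondence. Given a magog height-function matrix $(h_{i,j})_{0\le i,j\le n}$, place its entries on the faces of $G_{n,n}$, so that each interior vertex $v_{i,j}$ is surrounded by the four entries $h_{i-1,j-1},h_{i-1,j},h_{i,j-1},h_{i,j}$. Orient each edge so that, standing on the edge and facing along the arrow, the larger of the two adjacent height values lies on the left (the standard ``height increases to the left'' rule). Horizontally adjacent heights differ by exactly $\pm1$, so every vertical edge receives a well-defined orientation. Vertically adjacent heights differ by $1$ or by $1-2k$, which is odd and in particular nonzero. Hence horizontal edges are also oriented unambiguously. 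The boundary rows and columns $0,1,\dots,n$ and $n,\dots,0$ produce the domain-wall boundary conditions, exactly as in the ASM case.

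The next step is a local case analysis at each interior vertex. Write the column step $h_{i,j}-h_{i-1,j}=1-2c$, where $c=c_{i,j}-c_{i-1,j}\ge 0$ is the column increment of the corner-sum matrix (Lemma~\ref{lem:hf}). The vertex's in-degree is determined by the signs of the four differences around it, which in turn are read off from the $2\times 2$ block of the magog matrix. Using $a_{i,j}=c_{i,j}-c_{i-1,j}-c_{i,j-1}+c_{i-1,j-1}$, I will check the following:
\begin{itemize}
\item[] entries $0$ and $\pm1$ with partial row sums in $\{0,1\}$ give the six ASM-type vertices with in-degree $2$;
\item[] a $1$ entered when the row partial sum goes from $1$ to $2$ (or a $-1$ that returns it from $2$ to $1$) gives in-degree $3$ or $1$.
\end{itemize}
These should be precisely the two pictured odd structures. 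To identify which is which, I will compare with the corresponding height patterns $[k+3,k+2,k+1,k]$ and $[k+2,k+3,k,k+1]$ from the proof of Lemma~\ref{lem:fpl}.

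The row condition on counts of degree-1 versus degree-3 vertices should then translate directly into nonnegativity of partial row sums, just as the left/right-facing count does in Lemma~\ref{lem:fpl}. The forbidden structure should correspond to violations of $h_{i+1,j+1}+h_{i,j-1}\le 2h_{i,j}+1$. The violating configuration $h_{i,j}=k$, $h_{i,j-1}=k+1$, $h_{i+1,j+1}=k+2$ will be translated edge by edge into orientations and matched with the drawn five-edge picture. For the inverse map, I will reconstruct $h$ from the orientation by starting at $h_{0,j}=j$ and walking down and across, changing height by $\pm1$ per crossed edge according to its orientation. Well-definedness requires that going around any vertex returns to the same value, i.e.\ that the net change around each vertex is zero. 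I will verify this directly on the two odd structures, and it is automatic for 2-in-2-out vertices.

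I expect the main obstacle to be consistency when column jumps of size $|1-2k|\ge 3$ occur. A single edge records only the sign of a height difference, not its magnitude, so the inverse map is not locally determined as it is in the ASM case. My plan is to define the inverse via the corner-sum matrix instead. Orientations of vertical edges encode row increments $c_{i,j}-c_{i,j-1}\in\{0,1\}$ exactly, and since the first column of $c$ is known, these determine all of $c$, hence $h=i+j-2c$. I will then check that the horizontal orientations are consistent with this reconstruction: they record the parity-type information of the column increments. I will also verify that the five defining conditions are preserved in both directions.
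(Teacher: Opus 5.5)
\paragraph{Gap: the horizontal-consistency check fails for the paper's definition.} Your forward map and your injectivity argument are sound: the vertical edges record $c_{i,j}-c_{i,j-1}\in\{0,1\}$, and hence all of $c$. But the step where you ``check that the horizontal orientations are consistent with this reconstruction'' cannot be carried out from the five defining conditions, because it is false.

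Put $s_{i,j}=1$ if the vertical edge below $v_{i,j}$ points down, and $\rho_{i,j}=1$ if the horizontal edge from $v_{i,j}$ to $v_{i,j+1}$ points west (both are $0$ otherwise). Let $a_{i,j}=s_{i,j}-s_{i-1,j}$ be the reconstructed matrix with partial row sums $r_{i,j}$, and let $k_{i,j}$ be the count in the fourth condition up to $v_{i,j}$.
\begin{itemize}
\item The in-degree of $v_{i,j}$ is $2-a_{i,j}+\rho_{i,j}-\rho_{i,j-1}$, so summing along a row gives $r_{i,j}=\rho_{i,j}+k_{i,j}$.
\item The image of the reconstructed matrix has $\rho_{i,j}=1$ exactly when $r_{i,j}\ge 1$.
\item You therefore need $k_{i,j}=0$ wherever $\rho_{i,j}=0$, and no condition in the definition forces this.
\end{itemize}

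\paragraph{A counterexample at $n=5$.} The matrix
\[
\begin{pmatrix}
0&0&0&0&1\\
0&0&1&0&0\\
1&1&-1&1&-1\\
0&0&1&-1&1\\
0&0&0&1&0
\end{pmatrix}
\]
is a magog matrix (all nine special inequalities hold). Its third row has partial sums $1,2,1,2,1$, so in its image $v_{3,2},v_{3,4}$ have in-degree $1$ and $v_{3,3},v_{3,5}$ have in-degree $3$.

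Now reverse the horizontal edge between $v_{3,3}$ and $v_{3,4}$ so that it points east. Then:
\begin{itemize}
\item $v_{3,3}$ and $v_{3,4}$ become 2-in-2-out;
\item the only odd vertices left in row $3$ are the pictured ones at $v_{3,2}$ and $v_{3,5}$, and the running count is $0,1,1,1,0$;
\item the boundary is unchanged;
\item no vertex of row $3$ has eastward edges on both sides, so no forbidden structure appears.
\end{itemize}
This is a magog vertex model in the sense of the definition. It has the same vertical edges as the image of the matrix but is different from it. Hence your inverse is not two-sided, and the forward map is injective but not surjective. For the definition as written, no bijection exists at $n=5$. The paper's proof, which only says to ``undo'' each step, passes over the same point.

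\paragraph{What is missing.} The missing ingredient is a stronger definition: for example, require that a horizontal edge may point east only where the count of the fourth condition is $0$. With that condition your direct route, which avoids the paper's passage through fully packed loops, becomes a complete proof:
\begin{itemize}
\item $k\ge 0$ gives $r\ge 0$.
\item The boundary gives $s_{0,j}=0$, $s_{n,j}=1$ and $r_{i,n}=1+k_{i,n}$. Since $\sum_i r_{i,n}=\sum_j(s_{n,j}-s_{0,j})=n$, every $k_{i,n}=0$, so rows sum to $1$.
\item $\rho_{i,j}=1$ exactly when $r_{i,j}\ge 1$, so the orientation is the image of the reconstructed matrix.
\item The forbidden structure becomes exactly a violation of the special inequality.
\end{itemize}

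\paragraph{Smaller corrections.}
\begin{itemize}
\item With ``larger height on the left'' the left and right boundary arrows point outward and the top and bottom ones inward. This is the reverse of the domain-wall convention, so use ``larger on the right''.
\item With that convention, a $0$ entry always gives a 2-in-2-out vertex. A $1$ gives in-degree $1$ exactly when the preceding partial row sum is $\ge 1$. A $-1$ gives in-degree $3$ exactly when the resulting partial row sum is $\ge 1$. Partial row sums can exceed $2$, so ``from $1$ to $2$'' is too narrow.
\item Your first inverse idea, changing the height by $\pm 1$ per crossed edge, fails at the odd vertices, where the net change around the vertex is $\pm 2$.
\end{itemize}
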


\begin{proof}
Given a magog fully packed loop, direct the edges so that they go from even to odd vertices. Fill in the rest of the subgraph, and make those edges directed from odd to even vertices. Right-facing structures will become the allowed structure with 3 incoming arrows and left-facing structure will become the allowed structure with 1 incoming arrow. This mapping also proves the fourth condition using the fourth condition of magog fully packed loops. Both forbidden structures in the magog fully packed loop model result in the forbidden structure for magog vertex models. Lastly, applying this bijection to alternating boundary conditions in a magog fully packed loop will result in domain-wall boundary conditions in a magog vertex model. For the reverse direction, we start with a magog vertex model
configuration and ``undo'' each of the steps: keep only those edges which start
at an even vertex and end at an odd vertex then make those edges undirected.
\end{proof}

By combining Lemmas~\ref{lem:cs}, \ref{lem:hf}, \ref{lem:fpl}, and \ref{lem:vertex}, we have proved Theorem~\ref{thm:bijections}.

\end{document}